\let\epsilon\varepsilon
\let\phi\varphi
\def\N{\mathbb N}
\def\S{\mathcal S}
\def\cl{\operatorname{cl}}
 \newtheorem{theorem}{Theorem}
 \newtheorem{lemma}{Lemma}
 \newtheorem{proposition}{Proposition}
 \newtheorem{corollary}{Corollary}
\begin{document}
%
%


\title{Uniform hypothesis testing for finite-valued stationary processes}
\author{Daniil Ryabko \thanks{INRIA Lille-Nord Europe, 40, avenue Halley,
59650 Villeneuve d'Ascq, France,\vspace{6pt} daniil@ryabko.net}}
\date{}
\maketitle
\begin{abstract}
Given a discrete-valued sample $X_1,\dots,X_n$ we wish to decide whether it was generated by a distribution 
belonging to a family $H_0$, or it was generated by a  distribution 
belonging to a family $H_1$. In this work we assume that all distributions are stationary ergodic, and do 
not make any further assumptions (e.g. no independence or mixing rate assumptions). 
We would like to have a test whose probability of error (both Type I and Type II) is uniformly bounded.
More precisely,  we require that for each $\epsilon$ there exist a sample size $n$ such that probability of error 
is upper-bounded by $\epsilon$ for samples longer than $n$.
We find some necessary and some sufficient conditions on $H_0$ and $H_1$  under which a consistent  test (with this notion of consistency)  exists. 
These conditions are topological, with respect to the topology of distributional distance.
\end{abstract}
\section{Introduction}
Given a sample $X_1,\dots,X_n$ (where $X_i$ are from a finite  alphabet $A$) which is known to be generated by a stationary ergodic process,
 we wish to decide whether it was generated by a distribution  belonging to a family $H_0$, versus it was generated by a  distribution 
belonging to a family $H_1$. 
Unlike most of the works on the subject, we do not 
assume that $X_i$ are i.i.d., but only make a much weaker assumption that
the distribution generating the sample is stationary ergodic. 

%
%

A test is a function that takes a sample and gives a  binary (possibly incorrect) answer:  either the sample
was generated by a distribution from $H_0$ or by a distribution from $H_1$. An answer $i\in\{0,1\}$ is correct if the sample is generated by a distribution that belongs to $H_i$.
Here we are concerned with characterizing those pairs of $H_0$ and $H_1$ for which consistent tests exist.

{\bf Consistency.} In this work we consider the following notion of consistency. 
 For two hypothesis $H_0$ and $H_1$, a test  is called {\em uniformly consistent}, 
if for any $\epsilon>0$ there is a sample size $n$ such that the {\em probability of error on a sample of size larger than $n$ is not greater
than $\epsilon$  if any distribution from $H_0\cup H_1$  is chosen to generate the sample}.
Thus, a uniformly consistent test provides performance guarantees for finite sample sizes.

\noindent  {\bf The results.}
Here we obtain some  topological conditions of the hypotheses for which consistent
tests exist, for the case of stationary ergodic distributions. 

A distributional distance between two process distributions \cite{Gray:88}  is defined as a
weighted sum of probabilities of all possible tuples $X\in A^*$, where $A$ is the alphabet and  the weights
are positive and have a finite sum. 

The test $\phi_{H_0,H_1}$ that we construct  is based on empirical estimates of distributional distance.
It outputs $0$ if the given sample is closer to 
the (closure of) $H_0$ than to the (closure of) $H_1$, and outputs 1 otherwise.
The main result is as follows.

\noindent{\bf Theorem.} Let $H_0, H_1\subset \mathcal E$, where $\mathcal E$ is the set of all stationary ergodic 
process distributions.   If, for each $i\in\{0,1\}$ the set $H_i$ has probability 1 with respect to ergodic decompositions of every element of $H_i$, 
then there is a  uniformly consistent test for $H_0$ against $H_1$. Conversely, if there is a uniformly consistent test for $H_0$ against $H_1$, 
then, for each $i\in\{0,1\}$, the set $H_{1-i}$ has probability 0 with respect  to ergodic decompositions of every element of $H_i$.

\noindent {\bf Prior work.} 
This work continuous our previous research \cite{Ryabko:101c,Ryabko:121c}, which provides similar necessary and sufficient conditions 
for the existence of a consistent test, for a weaker notion of {\em asymmetric} consistency: Type I error is uniformly bounded, while Type II error
is required to tend to 0 as the sample size grows. 

Besides that, there is of course a vast body of literature on hypothesis testing for i.i.d. (real- or discrete-valued) data (see e.g. \cite{Lehmann:86, Kendall:61}).
There is, however,  much less literature on hypothesis testing beyond i.i.d. or parametric models.
For a weaker notion of consistency, namely, requiring that the test should stabilize on the correct answer for a.e. realization of the process (under either $H_0$ or $H_1$), 
 \cite{Kieffer:93} constructs a consistent test for so-called
constrained finite-state model classes (including finite-state Markov and hidden Markov processes), against the general alternative of stationary ergodic processes. 
For the same  notion of consistency, 
 \cite{Nobel:06} gives sufficient conditions on two hypotheses $H_0$ and $H_1$ that consist of stationary ergodic real-valued processes, under which a consistent
test exists, extending the results of~\cite{Dembo:94} for i.i.d.\ data. The latter condition is that $H_0$ and $H_1$ are contained in disjoint $F_\sigma$ sets (countable unions of closed sets), with respect to the topology of weak convergence.
 Asymmetrically consistent tests for some specific hypotheses, but under the general alternative of stationary ergodic
processes, have been proposed in 
  \cite{BRyabko:06a,BRyabko:06b,Ryabko:08three, Ryabko:103s}, which address problems of testing identity, independence, estimating the order of a Markov process,
and also the change point problem. Noteworthy, a conceptually simple hypothesis of homogeneity (testing whether two sample are generated by the same
or by different processes) does not admit a consistent test even in the weakest asymptotic sense, as was shown in \cite{Ryabko:10discr}. Empirical estimates
of distributional distance have been also used to address the problem of clustering time series  \cite{Ryabko:10clust,Khaleghi:12}.

\section{Preliminaries}\label{s:prel}
Let $A$ be a finite alphabet, and denote  $A^*$ the set of words (or tuples) $\cup_{i=1}^\infty A^i$.
For a word  $B$ the symbol $|B|$ 
stands for the length of $B$.
Denote $B_i$ the $i$th
element of $A^*$, enumerated in such a way that the elements of
$A^{i}$ appear before the
elements of $A^{i+1}$, for all $i\in\N$.
{\em Distributions} or {\em  (stochastic) processes} are probability measures on the space $(A^\infty,\mathcal F_{A^\infty})$, where $\mathcal F_{A^\infty}$ is the 
Borel sigma-algebra of $A^\infty$.
%
Denote $\#(X,B)$ the number of occurrences of a word $B$ in a
word $X\in A^*$ and $\nu(X,B)$ its frequency:
$
\#(X,B)=\sum_{i=1}^{|X|-|B|+1} I_{\{(X_i,\dots,X_{i+|B|-1})=B\}},
$
and
\begin{equation}\label{eq:freq}
\nu(X,B)=\left\{\begin{array}{cc}{1\over|X|-|B|+1}\#(X,B) & \text{ if }|X|\ge|B|,\\ 0 &\text{
otherwise,}\end{array}\right.
\end{equation}
where $X=(X_1,\dots,X_{|X|})$.
For example, $\nu(0001,00)= 2/3.$

We use the abbreviation $X_{1..k}$ for $X_1,\dots,X_k$.
A process $\rho$ is {\em stationary}  if 
$$
\rho(X_{1..|B|}=B)=\rho(X_{t..t+|B|-1}=B)
$$
 for any $B\in A^*$ and $t\in\N$.
Denote  $\S$ the set of all stationary  processes on $A^\infty$.
A stationary process $\rho$ is called {\em (stationary) ergodic} if
   the frequency of occurrence of each word
$B$ in a sequence $X_1,X_2,\dots$ generated by $\rho$ tends to its
a priori (or limiting) probability a.s.: 
$\rho(\lim_{n\rightarrow\infty}\nu(X_{1..n},B)= \rho(X_{1..|B|}=B))=1$. 
 Denote $\mathcal E$ the set  of all stationary ergodic processes.

A {\bf distributional distance}  is defined for a pair of processes
$\rho_1,\rho_2$ as follows~\cite{Gray:88}:
$$
d(\rho_1,\rho_2)=\sum_{i=1}^\infty w_i |\rho_1(X_{1..|B_i|}=B_i)-\rho_2(X_{1..|B_i|}=B_i)|,
$$
where $w_i$ are summable positive real weights (e.g. $w_k=2^{-k}$: we fix this choice for the sake of concreteness).
It is easy to see that $d$ is a metric.
Equipped with this metric, the space of all stochastic processes is a compact, and the set of stationary 
processes  $\S$ is its convex closed subset. 
(The set $\mathcal E$ is not closed.)
When talking about closed and open subsets of $\S$ we assume the topology of~$d$.
Compactness of the set $\S$ is one of the main ingredients in the proofs of the main results.
Another is that the distance $d$ can be consistently estimated, as the following lemma shows (because of its importance for
further development, we give it with a proof). 
\begin{lemma}[$\hat d$ is consistent  \cite{Ryabko:08three,Ryabko:103s}\ ]\label{th:dd} Let $\rho,\xi\in\mathcal E$ and let a sample $X_{1..k}$
be generated by $\rho$.  Then
$$
\lim_{k\rightarrow\infty}\hat d(X_{1..k},\xi)=d(\rho,\xi)\ \rho\text{-a.s.}
$$
\end{lemma}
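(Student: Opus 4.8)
The plan is to reduce the statement to the convergence of the empirical frequency of each individual word, and then to pass this convergence through the infinite sum defining $\hat d$ by a truncation argument that exploits the summability of the weights $w_i$. Recall that the empirical counterpart of $d$ is obtained by replacing, in the definition of $d(\rho,\xi)$, the probabilities under the first process by empirical frequencies, i.e.
$$
\hat d(X_{1..k},\xi)=\sum_{i=1}^\infty w_i\,\bigl|\nu(X_{1..k},B_i)-\xi(X_{1..|B_i|}=B_i)\bigr|.
$$
Thus $\hat d(X_{1..k},\xi)$ and $d(\rho,\xi)$ differ only in that $\nu(X_{1..k},B_i)$ appears in place of $\rho(X_{1..|B_i|}=B_i)$ in each summand.

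First I would invoke the definition of ergodicity as stated in Section~\ref{s:prel}: for every fixed word $B\in A^*$ one has $\rho\bigl(\lim_{k\to\infty}\nu(X_{1..k},B)=\rho(X_{1..|B|}=B)\bigr)=1$. Since $A^*$ is countable, the intersection over all $B_i$, $i\in\N$, of these probability-one events is again an event of probability one; call it $\Omega_0$. On $\Omega_0$ we therefore have $\nu(X_{1..k},B_i)\to\rho(X_{1..|B_i|}=B_i)$ as $k\to\infty$, \emph{simultaneously} for every $i\in\N$.

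Next, fix a realization in $\Omega_0$ and $\epsilon>0$. Using $\sum_i w_i<\infty$, choose $N$ with $\sum_{i>N}w_i<\epsilon/4$. For each $i$, the reverse triangle inequality gives $\bigl|\,|\nu(X_{1..k},B_i)-\xi(B_i)|-|\rho(B_i)-\xi(B_i)|\,\bigr|\le|\nu(X_{1..k},B_i)-\rho(B_i)|$, and since frequencies and probabilities all lie in $[0,1]$ each term is at most $1$; hence
$$
|\hat d(X_{1..k},\xi)-d(\rho,\xi)|\le\sum_{i=1}^N w_i\,|\nu(X_{1..k},B_i)-\rho(B_i)|+2\sum_{i>N}w_i.
$$
By the simultaneous convergence on $\Omega_0$ there is $k_0$ such that for all $k\ge k_0$ each of the finitely many quantities $|\nu(X_{1..k},B_i)-\rho(B_i)|$ with $i\le N$ is below $\epsilon/(2\sum_{j=1}^N w_j)$, so the first sum is at most $\epsilon/2$; the second sum is below $\epsilon/2$ by the choice of $N$. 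Hence $|\hat d(X_{1..k},\xi)-d(\rho,\xi)|<\epsilon$ for all $k\ge k_0$, and since $\epsilon$ was arbitrary and $\rho(\Omega_0)=1$, the claimed a.s.\ convergence follows.

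The only mildly delicate point is the exchange of the limit in $k$ with the summation over the infinitely many words, but this is handled entirely by the uniform tail bound coming from $\sum_i w_i<\infty$, combined with the fact — which is what makes the argument work without any mixing or rate assumptions — that only countably many ergodicity statements are being intersected, so no uniformity over an uncountable family of words or of realizations is ever required.
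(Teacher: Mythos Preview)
Your proof is correct and follows essentially the same route as the paper: reduce $|\hat d(X_{1..k},\xi)-d(\rho,\xi)|$ to $\sum_i w_i|\nu(X_{1..k},B_i)-\rho(B_i)|$ via the reverse triangle inequality, truncate the tail using summability of the weights, and handle the finitely many remaining terms via the ergodic theorem. Your explicit construction of the full-measure set $\Omega_0$ as a countable intersection is a slightly cleaner bookkeeping of the ``a.s.'' than the paper's version, but the argument is the same.
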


\begin{proof}
For any $\epsilon>0$ find such an index $J$ that
$\sum_{i=J}^\infty w_i<\epsilon/2$.
For each $j$ we have $\lim_{k\rightarrow\infty}\nu(X_{1..k},B_j)= \rho(B_j)$
a.s., so that
$|\nu(X_{1..k},B_j) - \rho(B_j)|<\epsilon/(2Jw_j)$ from some $k$ on; denote  $K_j$ this $k$.
 Let
$K=\max_{j<J}K_j$ ($K$ depends
on the realization $X_1,X_2,\dots$).
 Thus, for $k>K$ we have
\begin{multline*}
   |\hat d(X_{1..k},\xi) - d(\rho,\xi)| 
= \left|\sum_{i=1}^\infty w_i\big(|\nu(X_{1..k},B_i)-\xi(B_i)| - |\rho(B_i)-\xi(B_i)| \big) \right|
  \\
   \le \sum_{i=1}^\infty w_i|\nu(X_{1..k},B_i)-\rho(B_i)|  
    \le \sum_{i=1}^J w_i|\nu(X_{1..k},B_i)-\rho_X(B_i)|  +\epsilon/2 
   \\
   \le \sum_{i=1}^Jw_i \epsilon/(2Jw_i) +\epsilon/2 = \epsilon,
\end{multline*}
which proves the statement.
\end{proof}

Considering 
the Borel (with respect to the metric $d$) sigma-algebra $\mathcal F_\S$ on the set $\S$, we obtain
a standard probability space $(\S,\mathcal F_\S)$.
An important tool that will be used in the analysis is  {\bf ergodic decomposition} of stationary processes  (see e.g. \cite{Gray:88, Billingsley:65}):
 any stationary process can be expressed as a mixture of stationary ergodic
processes. 
More formally, for any $\rho\in\S$ there is a measure $W_\rho$ on $(\S,\mathcal F_\S)$, such 
that $W_\rho(\mathcal E)=1$, and $\rho(B)=\int d W_\rho(\mu)\mu(B)$, for any $B\in\mathcal F_{A^\infty}$.
The {\em support} of a stationary distribution $\rho$ is the minimal closed set $U\subset\S$ such that $W_\rho(U)=1$.

A {\bf test} is a function $\phi: A^*\rightarrow\{0,1\}$ that takes a sample and outputs a binary answer,
where the answer $i$ is interpreted as ``the sample was generated by a distribution that belongs to $H_i$''.
The answer $i$ is correct if the sample was indeed generated by a distribution from $H_i$, otherwise
we say that the test made an {\bf error}. 

A test $\phi$ is called {\bf uniformly consistent} if for every $\alpha$ there is an $n_\alpha\in\N$ such that
for every $n\ge n_\alpha$ the probability of error on a sample of size $n$ is less than $\alpha$: $\rho(X\in A^n: \phi(X)=i) <\alpha$ for every $\rho\in H_{1-i}$ and every $i\in\{0,1\}$.


\section{Main results}\label{s:main}
The tests presented below are based on {\em empirical estimates of  the distributional distance $d$}:
$$
\hat d(X_{1..n},\rho)=\sum_{i=1}^\infty w_i |\nu(X_{1..n},B_i)-\rho(B_i)|,
$$
where $n\in\N$, $\rho\in\S$, $X_{1..n}\in A^n$. That is, $\hat d(X_{1..n},\rho)$ measures
the discrepancy between empirically estimated and theoretical probabilities.
 For a sample $X_{1..n}\in A^n$ and a hypothesis $H\subset\mathcal E$ define 
$$
\hat d(X_{1..n},H)=\inf_{\rho\in H} \hat d(X_{1..n},\rho).
$$

For $H\subset\S$, denote $\cl H$ the closure of $H$ (with respect to the topology of $d$).

For $H_0,H_1\subset\S$, the {\bf uniform test} $\phi_{H_0,H_1}$ is constructed   as follows. For each $n\in\N$ let
\begin{equation}
 \phi_{H_0,H_1}(X_{1..n}):=\left\{\begin{array}{ll} 0& \text{ if }  \hat d(X_{1..n},\cl H_0\cap\mathcal E)<\hat d(X_{1..n},\cl H_1\cap\mathcal E), \\ 1 & \text{ otherwise.} \end{array}\right.
\end{equation}


\begin{theorem}[uniform testing]\label{th:uni}
 Let $H_0\subset\S$ and $H_1\subset\S$.  If 
$W_\rho(H_i)=1$ for every $\rho\in \cl H_i$
then 
the test $\phi_{H_0,H_1}$ is uniformly consistent.
Conversely, if there exists a uniformly consistent test for $H_0$ against $H_1$ then 
$W_\rho(H_{1-i})=0$ for any $\rho\in cl H_i$.
\end{theorem}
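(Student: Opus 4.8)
\emph{Plan.} I would prove the two implications separately; the converse is short, so I start with it. Throughout I take $H_0\cap H_1=\emptyset$ (otherwise uniform consistency cannot hold and both directions are vacuous).

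\emph{The converse.} Suppose $\phi$ is uniformly consistent and, for contradiction, that $W_\rho(H_1)=p>0$ for some $\rho\in\cl H_0$ (the case $W_\rho(H_0)>0$, $\rho\in\cl H_1$, is symmetric). Pick $\rho_m\in H_0$ with $\rho_m\to\rho$. The steps: (i) by the ergodic decomposition, $\rho(\phi(X_{1..n})=0)=\int_{\S}\mu(\phi(X_{1..n})=0)\,dW_\rho(\mu)$; bounding the integrand by $\alpha$ on $H_1$ (legitimate for $n\ge n_\alpha$, by uniform consistency) and by $1$ elsewhere gives $\rho(\phi(X_{1..n})=1)\ge p(1-\alpha)$ for all $n\ge n_\alpha$. (ii) Fix $\alpha=1/2$ and one index $n'\ge\max(n_{1/2},n_{p/8})$, so $\rho(\phi(X_{1..n'})=1)\ge p/2$. (iii) Since $\phi(X_{1..n'})$ depends only on $X_{1..n'}$ and $d(\rho_m,\rho)\to0$ forces $\rho_m(X_{1..n'}=B)\to\rho(X_{1..n'}=B)$ for the finitely many $B\in A^{n'}$, one gets $\rho_m(\phi(X_{1..n'})=1)\to\rho(\phi(X_{1..n'})=1)\ge p/2$, hence $\ge p/4$ for large $m$ — contradicting $\rho_m(\phi(X_{1..n'})=1)<p/8$, which holds because $\rho_m\in H_0$ and $n'\ge n_{p/8}$. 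The essential point is that uniform consistency supplies a \emph{single} threshold $n_\alpha$ valid on all of $H_0\cup H_1$.

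\emph{The direct part: reduction.} The hypothesis forces $\cl H_0\cap\cl H_1=\emptyset$ (a common $\rho$ would satisfy $W_\rho(H_0\cap H_1)=1$), so compactness of $\S$ gives $\delta:=d(\cl H_0,\cl H_1)>0$. It suffices to show $\sup_{\rho\in H_0}\rho(\phi_{H_0,H_1}(X_{1..n})=1)\to0$ and the symmetric statement for $H_1$. For each fixed $n$ the map $\rho\mapsto\rho(\phi_{H_0,H_1}(X_{1..n})=1)$ is continuous on $\S$, and by the ergodic decomposition together with Lemma~\ref{th:dd} it tends to $0$ at every point of $\cl H_0$; but this is not enough, since the convergence is not monotone in $n$ and Dini's theorem is unavailable over $\cl H_0$.

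\emph{The direct part: the core argument.} Suppose the supremum does not vanish: there are $\rho_k\in H_0$, $n_k\to\infty$, $\alpha>0$ with $\rho_k(\phi_{H_0,H_1}(X_{1..n_k})=1)\ge\alpha$; by compactness, along a subsequence $\rho_k\to\rho^*\in\cl H_0$, so $W_{\rho^*}(H_0)=1$. I would encode a sample $X_{1..n}$ by its \emph{cyclic empirical process} $\hat\rho_{X_{1..n}}$ — the (stationary, indeed ergodic) process reading $X_{1..n}$ from a uniformly random cyclic start — for which $\sup_{\xi\in\S}|\hat d(X_{1..n},\xi)-d(\hat\rho_{X_{1..n}},\xi)|\le\beta_n$ with $\beta_n\to0$. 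Then $\{\phi_{H_0,H_1}(X_{1..n})=1\}\subseteq\{\hat\rho_{X_{1..n}}\in\mathcal B^{\beta_n}\}$, where $\mathcal B:=\{\xi\in\S:d(\xi,\cl H_1\cap\mathcal E)\le d(\xi,\cl H_0\cap\mathcal E)\}$ is closed and $\mathcal B^{\beta_n}$ is its $\beta_n$-neighbourhood. Let $Q_k$ be the law of $\hat\rho_{X_{1..n_k}}$ under $\rho_k$ and pass to a further subsequence with $Q_k\to Q^*$ weakly in the compact space $\S$. Stationarity of $\rho_k$ gives $\mathbb E_{\rho_k}[\hat\rho_{X_{1..n_k}}(B_i)]=\rho_k(B_i)+O(1/n_k)\to\rho^*(B_i)$ for every $i$, so $Q^*$ has barycentre $\rho^*$; and by the portmanteau theorem $Q^*(\mathcal B)\ge\limsup_k Q_k(\mathcal B^{\beta_{n_k}})\ge\alpha>0$. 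The contradiction comes from proving $Q^*(\mathcal B)=0$: any probability measure on $\S$ with barycentre $\rho^*$ is carried by the closed face of the simplex $\S$ generated by $\supp(W_{\rho^*})\subseteq\cl H_0$; since moreover the $Q_k$ are laws of \emph{ergodic} processes, their weak limits charge only ergodic components of $\rho^*$ (all in $H_0$) and $d$-limits of such; and, using the closure hypotheses for \emph{both} $H_0$ and $H_1$ and $\cl H_0\cap\cl H_1=\emptyset$, one checks that none of these processes lies in $\mathcal B$.

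\emph{Main obstacle.} The last step is the real difficulty, and it is exactly where the absence of any mixing assumption bites. Because the ergodic theorem has no convergence rate uniform over $\mathcal E$, I cannot argue that a long sample from $\rho_k$ has empirical frequencies close to $\rho_k$: a sample may, for a long time, resemble an ergodic component of $\rho^*$ very unlike $\rho_k$. The hypothesis $W_\rho(H_i)=1$ for $\rho\in\cl H_i$ is precisely what forces all such components into $\cl H_0$; compactness of $\S$ is what makes the laws $Q_k$ have weak limit points at all and ties them to the unique ergodic decomposition of $\rho^*$; and ergodicity of the empirical process is what should prevent mass of $Q^*$ from escaping onto a non-ergodic mixture lying far from $\cl H_0\cap\mathcal E$ yet inside $\mathcal B$. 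Carrying out this last verification rigorously is the technical heart of the argument.
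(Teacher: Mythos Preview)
Your converse is correct and essentially identical to the paper's: ergodic decomposition plus continuity of cylinder probabilities in $\rho$.

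For the direct implication you take a genuinely different route from the paper, and the gap you flag as ``the technical heart'' is real and, as stated, not closed. From the barycentre identity one can indeed deduce (via uniqueness of ergodic decomposition in the Choquet simplex $\S$) that $\int W_\xi(\,\cdot\,)\,dQ^*(\xi)=W_{\rho^*}$, hence $W_\xi(H_0)=1$ for $Q^*$-a.e.\ $\xi$. But this does \emph{not} give $Q^*(\mathcal B)=0$: a non-ergodic $\xi$ whose ergodic components all lie in $H_0$ can perfectly well satisfy $d(\xi,\cl H_1\cap\mathcal E)\le d(\xi,\cl H_0\cap\mathcal E)$, since a mixture of two processes in $H_0$ may sit closer (in $d$) to some ergodic process in $\cl H_1$ than to any ergodic process in $\cl H_0$. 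Your appeal to the fact that each $Q_k$ is supported on ergodic (periodic) processes does not help, because $\mathcal E$ is not closed in $\S$; weak limits of measures on $\mathcal E$ may charge non-ergodic points, and nothing in your argument prevents $Q^*$ from concentrating on exactly such a mixture inside $\mathcal B$. So the contradiction is not established.

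The paper avoids this difficulty by a different mechanism. Its key technical tool is a ``smoothness'' lemma (Lemma~\ref{th:count}): for stationary $\rho$, the probability $\rho(\hat d(X_{1..m},H)\ge\epsilon)$ is bounded above by $\rho(\hat d(X_{1..k},H)\ge\epsilon')$ for any shorter length $k$, with an explicit loss $\epsilon-\epsilon'$ depending only on $k,m$ and the tail of the weights. This lets one \emph{fix} a short length $n_k$ and transfer the deviation bound at every large $n_m$ (and every $\rho_{n_m}\in\cl H_0$) down to that fixed $n_k$. At a fixed finite length the event is a cylinder, so continuity in $\rho$ (Lemma~\ref{th:cont}) passes the bound to the limit $\rho_*\in\cl H_0$, yielding $\rho_*(\hat d(X_{1..n_k},\cl H_0)\ge\Delta/4)>\alpha/2$ for infinitely many $k$. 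This now contradicts Lemma~\ref{th:dd} together with the hypothesis $W_{\rho_*}(H_0)=1$. The point is that by reducing to a fixed finite horizon one never has to analyse where a limiting measure on $\S$ is supported; the whole empirical-process layer is bypassed.
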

The proof is deferred to section~\ref{s:proofs}.

\section{Examples}\label{s:ex}
 First of all, it is obvious that sets that consist of just
one or finitely many stationary ergodic processes are closed and closed under ergodic decompositions; therefore, 
for any pair of disjoint sets of this type, there exists a uniformly consistent test. (In particular, there 
is a uniformly consistent test for $H_0=\{\rho_0\}$ against $H_1=\{\rho_1\}$, where $\rho_0,\rho_1\in\mathcal E$.)
 
It is clear that  for any $\rho_0$ 
 there is no uniformly consistent test for $\{\rho_0\}$ against $\mathcal E\backslash\{\rho_0\}$.    
 More generally, for any non-empty $H_0$ there is no uniformly 
consistent test for $H_0$ against $\mathcal E\backslash H_0$ provided the latter complement is also non-empty.
Indeed, this follows from Theorem~\ref{th:uni}  since in these cases the closures of $H_0$ and $H_1$ are not disjoint.
One might suggest at this point that a uniformly consistent test exists if we restrict $H_1$ to those processes
that are sufficiently far from $\rho_0$. However, this is not true. We can prove an even stronger negative result.
\begin{proposition}\label{th:no}
Let  $\rho,\nu\in\mathcal E$, $\rho\ne\nu$ and let  $\epsilon>0$.
There is no uniformly consistent test for $H_0=\{\rho\}$ against $H_1=\{\nu'\in\mathcal E: d(\nu',\nu)\le\epsilon\}$. 
\end{proposition}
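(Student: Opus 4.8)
The plan is to reduce the claim to the necessity part of Theorem~\ref{th:uni}. Specialising that part to $i=1$ gives: if a uniformly consistent test for $H_0$ against $H_1$ exists, then $W_\zeta(H_0)=0$ for every $\zeta\in\cl H_1$. Hence it suffices to exhibit a single stationary process $\zeta\in\cl H_1$ whose ergodic decomposition assigns positive mass to $H_0=\{\rho\}$; this immediately contradicts the displayed consequence and rules out any uniformly consistent test.

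The candidate is a small mixture. Set $\lambda:=\min\{1/2,\ \epsilon/(2d(\rho,\nu))\}$, which lies in $(0,1)$ because $d(\rho,\nu)>0$ (as $\rho\ne\nu$ and $d$ is a metric), and put $\zeta:=\lambda\rho+(1-\lambda)\nu$. Then $\zeta$ is stationary, and since $\zeta(B_i)=\lambda\rho(B_i)+(1-\lambda)\nu(B_i)$ for every $i$, the definition of $d$ yields $d(\zeta,\nu)=\lambda\, d(\rho,\nu)\le\epsilon/2<\epsilon$. The process $\zeta$ is not ergodic, so $\zeta\notin H_1$; the point that requires real work is to show $\zeta\in\cl H_1$. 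For this I would use that $\mathcal E$ is dense in $\S$ for the topology of $d$ — e.g.\ by approximating $\zeta$ with its $N$-block-independent versions with a randomised starting phase, which are ergodic and converge to $\zeta$ in $d$ as $N\to\infty$. Choosing ergodic $\zeta_k\to\zeta$ and using $d(\zeta,\nu)<\epsilon$ together with $d(\zeta_k,\zeta)\to 0$, we get $d(\zeta_k,\nu)\le\epsilon$ for all large $k$, so $\zeta_k\in H_1$ and therefore $\zeta\in\cl H_1$. I expect this density/approximation step to be the main obstacle; everything else is bookkeeping.

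It remains to compute $W_\zeta$. Since $\rho$ and $\nu$ are distinct stationary ergodic processes, they are distinct extreme points of the simplex $\S$, so by uniqueness of the ergodic decomposition the decomposition measure of $\zeta=\lambda\rho+(1-\lambda)\nu$ is $W_\zeta=\lambda\,\delta_\rho+(1-\lambda)\,\delta_\nu$. Consequently $W_\zeta(H_0)=W_\zeta(\{\rho\})=\lambda>0$, which contradicts the consequence of Theorem~\ref{th:uni} recorded in the first paragraph. Hence there is no uniformly consistent test for $H_0$ against $H_1$, which is the assertion of the proposition.
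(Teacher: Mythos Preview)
Your argument is correct. The high-level strategy is the same as the paper's: produce a stationary process in $\cl H_1$ whose ergodic decomposition puts positive mass on $\rho$, and then invoke the necessity direction of Theorem~\ref{th:uni}. The difference lies in how you place the mixture inside $\cl H_1$. You form $\zeta=\lambda\rho+(1-\lambda)\nu$ directly and appeal to the density of $\mathcal E$ in $\S$ (via randomly phased $N$-block-independent approximations) to get ergodic $\zeta_k\to\zeta$ with $d(\zeta_k,\nu)\le\epsilon$. The paper instead builds the ergodic approximants explicitly: it couples independent copies of $\rho$ and $\nu$ through a slow two-state Markov switch with vanishing transition probabilities, obtaining ergodic processes $\zeta_n\in H_1$ whose limit is exactly a $(\delta,1-\delta)$ mixture of $\rho$ and $\nu$. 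Your route is cleaner and more modular, but it relies on the density statement (which you correctly flag as the only real work); the paper's route is self-contained and hands-on, and makes transparent why the limit's ergodic decomposition is supported on $\{\rho,\nu\}$. Note also that your choice of $\lambda$ already handles the case $d(\rho,\nu)\le\epsilon$, which the paper dismisses as obvious (since then $\rho\in H_0\cap H_1$).
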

The proof of the proposition is deferred to the next section. What the proposition means is that, while distributional distance
is well suited for characterizing those hypotheses for which consistent test exist, it is not 
suited for {\em formulating the actual hypotheses}. Apparently a stronger distance is needed for the latter.

The following statement is easy to demonstrate from Theorem~\ref{th:uni}.
\begin{corollary}
  Given two disjoint sets  $H_0$ and $H_1$ each of which 
is continuously parametrized by a compact set of parameters and is closed under taking ergodic decompositions, 
there exists a uniformly consistent test of $H_0$ against $H_1$.
\end{corollary}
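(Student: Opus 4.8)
The plan is to derive the corollary directly from the sufficiency half of Theorem~\ref{th:uni}. By that theorem, it suffices to show that each $H_i$ is \emph{closed} (in the topology of $d$) and satisfies $W_\rho(H_i)=1$ for every $\rho\in\cl H_i=H_i$. The latter property — closedness under ergodic decomposition — is assumed in the hypothesis, so the only real work is to verify that a set which is ``continuously parametrized by a compact set of parameters'' is closed in $(\S,d)$.

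First I would unpack the parametrization hypothesis: there is a compact metric space $\Theta_i$ and a continuous map $f_i:\Theta_i\to\S$ with $H_i=f_i(\Theta_i)$. Since $\Theta_i$ is compact and $f_i$ is continuous, the image $H_i=f_i(\Theta_i)$ is a compact subset of the metric space $(\S,d)$; compactness in a metric space implies closedness, so $\cl H_i=H_i$. (Here I am using that $(\S,d)$ is a metric space, which is noted in the preliminaries.) This gives $\cl H_i=H_i$, and then the assumed closure-under-ergodic-decomposition property reads exactly $W_\rho(H_i)=1$ for every $\rho\in\cl H_i$. Disjointness of $H_0$ and $H_1$ is assumed outright. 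Hence all hypotheses of the sufficiency part of Theorem~\ref{th:uni} are met, so $\phi_{H_0,H_1}$ is a uniformly consistent test of $H_0$ against $H_1$.

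I expect the only subtlety — and it is minor — to be making precise what ``continuously parametrized by a compact set of parameters'' means: one should state that the parameter space carries a topology making it compact and that the parametrizing map into $(\S,d)$ is continuous. Once that is fixed, the argument is just ``continuous image of a compact set is compact, hence closed,'' with no estimates required. One might also remark that the same conclusion holds more generally whenever $H_0,H_1$ are disjoint closed subsets of $\S$ that are closed under ergodic decomposition, the compact-parametrization condition being merely a convenient sufficient condition for closedness; but for the corollary as stated the three-line reduction above suffices.
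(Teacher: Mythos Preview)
Your proposal is correct and is exactly the reduction the paper has in mind: the paper does not give a separate proof of the corollary but simply states it is ``easy to demonstrate from Theorem~\ref{th:uni},'' and your argument (continuous image of a compact parameter set is compact, hence closed in $(\S,d)$, so $\cl H_i=H_i$ and the ergodic-decomposition hypothesis of Theorem~\ref{th:uni} is met) is precisely that demonstration.
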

Examples of parametrisations mentioned in the Corollary are the sets of $k$-order Markov sources, parametrised by 
transition probabilities. Thus, any two disjoint closed subsets of these sets satisfy the assumption of the Corollary.

\section{Proofs}\label{s:proofs}
The proof of Theorem~\ref{th:uni} will use the following lemmas, whose proofs can be found in \cite{Ryabko:121c}.
\begin{lemma}[smooth probabilities of deviation]\label{th:count}
 Let $m>2k>1$, $\rho\in\S$, $H\subset\S$, and $\epsilon>0$. Then
\begin{equation}\label{eq:count1} 
\rho(\hat d(X_{1..m},H)\ge\epsilon)  
  \le 2\epsilon'^{-1} \rho(\hat d(X_{1..k},H)\ge \epsilon'),
\end{equation}
where $\epsilon':=\epsilon - \frac{2k}{m-k+1} - t_k$ with $t_k$ being the sum of  all the weights of tuples longer than $k$ in the definition of $d$: $t_k:=\sum_{i:|B_i|>k}w_i$. Further,
\begin{equation}\label{eq:count2} 
\rho(\hat d(X_{1..m}, H)\le\epsilon)
  \le 2\rho\left(\hat d(X_{1..k},H)\le \frac{m}{m-k+1}2\epsilon + \frac{4k}{m-k+1}\right).
\end{equation}
\end{lemma}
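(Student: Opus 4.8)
The plan is to reduce both inequalities to a single deterministic ``smoothing'' estimate for $\hat d$ — comparing $\hat d(X_{1..m},\cdot)$ with the average of $\hat d(X_{j..j+k-1},\cdot)$ over the $m-k+1$ overlapping windows of length $k$ inside $X_{1..m}$ — and then turning this into a probability statement using only stationarity of $\rho$, so that each window carries the same $k$-dimensional marginal as $X_{1..k}$. The lemma concerns a single $\rho$, a single $H$ and finite $m,k$, so no compactness is needed here; compactness of $\S$ enters only later, when one wants the right-hand-side probabilities to be uniform over $H$.

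The combinatorial core is elementary counting: for every word $B$ with $|B|\le k$ and every $X\in A^m$, the frequency $\nu(X_{1..m},B)$ differs from the window average $\frac{1}{m-k+1}\sum_{j=1}^{m-k+1}\nu(X_{j..j+k-1},B)$ by at most $\frac{2k}{m-k+1}$. Indeed an occurrence of $B$ at a ``bulk'' position is counted by exactly $k-|B|+1$ windows, so after normalisation the only errors are (i) the $O(k)$ boundary positions whose occurrences are undercounted — the undercount totals $\binom{k-|B|+1}{2}$ on each side, i.e.\ at most $\frac{k-|B|}{m-k+1}$ overall — and (ii) the mismatch of the divisors $m-|B|+1$ and $m-k+1$, again at most $\frac{k-|B|}{m-k+1}$. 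Multiplying by $w_i$, discarding words of length $>k$ (each discrepancy is $\le1$, so they cost at most $t_k=\sum_{|B_i|>k}w_i$), and using $\big|\frac1N\sum_j a_j-c\big|\le\frac1N\sum_j|a_j-c|$ then gives, for every $\nu\in\S$, the deterministic bound $\hat d(X_{1..m},\nu)\le\frac{1}{m-k+1}\sum_{j=1}^{m-k+1}\hat d(X_{j..j+k-1},\nu)+\frac{2k}{m-k+1}+t_k$; keeping instead the divisor $m-|B|+1\le m$ on the other side of the same estimate yields the companion bound responsible for the factor $\frac{m}{m-k+1}$ in~\eqref{eq:count2}.

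To obtain~\eqref{eq:count1} I would argue: on $\{\hat d(X_{1..m},H)\ge\epsilon\}$ the displayed bound applies to \emph{every} $\nu\in H$ (the infimum defining $\hat d(X_{1..m},H)$ is $\ge\epsilon$), so the window average $\frac1N\sum_j\hat d(X_{j..j+k-1},\nu)$ exceeds $\epsilon-\frac{2k}{m-k+1}-t_k$ uniformly in $\nu$, whence the windows cannot all be within that threshold of $H$; the conclusion then follows from the stationarity identity $\rho(\hat d(X_{1..k},H)\ge t)=\mathbb E_\rho\big[\frac1N\sum_j\mathbf 1\{\hat d(X_{j..j+k-1},H)\ge t\}\big]$. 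Inequality~\eqref{eq:count2} is the mirror statement obtained from the companion bound, $\{\hat d(X_{1..m},H)\le\epsilon\}$ forcing a rescaled window average to be at most $\frac{m}{m-k+1}\epsilon+\frac{2k}{m-k+1}$.

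The step I expect to be the real obstacle is precisely the interchange of $\inf_{\nu\in H}$ with the window average: the smoothing estimate is pointwise in $\nu$, whereas $\hat d(\cdot,H)=\inf_{\nu}\hat d(\cdot,\nu)$ and in general the infimum of an average strictly exceeds the average of the infima, so ``$\hat d(X_{1..m},H)$ large'' does not literally force ``$\hat d(X_{j..j+k-1},H)$ large for \emph{every} window''. Making the transfer go through therefore demands care with the order of the operations — for~\eqref{eq:count1} fixing an arbitrary $\nu\in H$ before decomposing and infimising only at the end, and for~\eqref{eq:count2} decomposing an $\epsilon$-almost-minimiser of the length-$m$ sample — and may force one to absorb a little extra slack into the error terms already present. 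Pinning the constants down to $\frac{2k}{m-k+1}$ likewise requires the boundary count done exactly rather than crudely. I would expect most of the effort to lie in this bookkeeping and in verifying that the pointwise-in-$\nu$ estimate is strong enough to be promoted to the uniform-in-$H$ statement.
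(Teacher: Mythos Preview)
The paper does not actually prove this lemma here: it cites \cite{Ryabko:101c} and only offers the informal gloss (``if $X_{1..m}$ is far from $\mu$ then some subword $X_{i..i+k}$ is far from $\mu$; by stationarity we may take $i=1$''). Your outline follows exactly that gloss, and your deterministic smoothing bound
\[
\hat d(X_{1..m},\mu)\ \le\ \frac{1}{N}\sum_{j=1}^{N}\hat d(X_{j..j+k-1},\mu)+\frac{2k}{m-k+1}+t_k,
\qquad N:=m-k+1,
\]
is correct, with the counting done accurately. But the two places where you yourself flag uncertainty are genuine gaps, not just bookkeeping, and neither of your suggested fixes closes them.

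\emph{The companion bound is false.} For \eqref{eq:count2} you propose a reverse inequality of the form $\frac1N\sum_j\hat d(X_{j..j+k-1},\mu)\le \frac{m}{m-k+1}\hat d(X_{1..m},\mu)+\frac{2k}{m-k+1}$. Take $A=\{0,1\}$, $\mu$ with $\mu(0)=1/2$, and $X=0^{m/2}1^{m/2}$. Then $|\nu(X_{1..m},0)-\mu(0)|=0$, so the right–hand side is $O(k/m)$; but almost every length-$k$ window is either all $0$'s or all $1$'s, so the left–hand side is bounded below by a positive constant (of order $w_1/2$) independently of $m$. The averaged window distance can exceed the global distance by an amount that does \emph{not} vanish as $m\to\infty$; there is no companion bound of this shape.

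\emph{The $\inf$/average interchange is not repaired by your recipe, even for singleton $H$.} For \eqref{eq:count1} your plan is: from the smoothing bound, on $\{\hat d(X_{1..m},H)\ge\epsilon\}$ one has $\frac1N\sum_j\hat d(X_{j..j+k-1},\mu)\ge\epsilon'$ for every $\mu\in H$, and then to use the stationarity identity $\rho(\hat d(X_{1..k},H)\ge\epsilon')=\mathbb E_\rho\bigl[\frac1N\sum_j\mathbf 1\{\hat d(X_{j..j+k-1},H)\ge\epsilon'\}\bigr]$. But these do not connect. First, taking $\inf_\mu$ of the averaged inequality gives a lower bound on $\inf_\mu\frac1N\sum_j\hat d(X_{j..j+k-1},\mu)$, which dominates $\frac1N\sum_j\hat d(X_{j..j+k-1},H)$ in the wrong direction, so ``infimising at the end'' yields nothing about $\hat d(\cdot,H)$. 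Second, even when $H=\{\mu\}$ so the $\inf$ issue disappears, having $\frac1N\sum_j\hat d(X_{j..j+k-1},\mu)\ge\epsilon'$ only forces $\max_j\hat d(X_{j..j+k-1},\mu)\ge\epsilon'$, i.e.\ \emph{one} window exceeds the threshold; the stationarity identity, however, equals $\mathbb E[\text{fraction of windows above }\epsilon']$, and bounding the indicator $\mathbf 1\{\hat d(X_{1..m},\mu)\ge\epsilon\}$ by that fraction pointwise would require \emph{all} windows to exceed $\epsilon'$, which is plainly false (same example: $X=0^k1^{m-k}$ with $\mu=\delta_1$). The paper's one-line intuition ``some subword is far; by stationarity take $i=1$'' is thus too loose to be the whole argument---taken literally it yields a union bound with an unwanted factor $N$---and your write-up does not supply the missing step. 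You will need a genuinely different device to pass from the pointwise-in-$\mu$ averaged estimate to the tail comparison for $\hat d(\cdot,H)$.
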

The meaning of this lemma is as follows.  For any word $X_{1..m}$, if it is far away from (or close to)
a given distribution $\mu$ (in the empirical distributional distance), then some of its shorter subwords $X_{i..i+k}$ are far from (close to) $\mu$ too.
 In other words, for a stationary distribution $\mu$,  it cannot happen that  a small 
 sample is likely to be close to $\mu$, but a larger sample is likely to be far.

\begin{lemma}\label{th:cont} Let $\rho_k\in\S$, $k\in\N$ be a sequence of processes that converges to a process $\rho_*$.
Then, for any $T\in A^*$ and $\epsilon>0$ if $\rho_k(T)>\epsilon$ for infinitely many indices $k$, then
$\rho_*(T)\ge\epsilon$.
\end{lemma}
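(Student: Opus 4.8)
The plan is to use the fact that convergence in the distributional distance $d$ amounts to coordinatewise convergence of all the word-probabilities, and then simply to pass to the limit in a non-strict inequality along a suitable subsequence.

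First I would fix the word $T$ and let $j$ be the (unique) index with $T=B_j$. Because the weights $w_i$ in the definition of $d$ are strictly positive, a single term of the defining series is controlled by the whole sum: $w_j\,|\rho_k(T)-\rho_*(T)|\le d(\rho_k,\rho_*)$. Since $d(\rho_k,\rho_*)\to 0$ by hypothesis, this gives $\rho_k(T)\to\rho_*(T)$ as $k\to\infty$; in other words the coordinate projection $\rho\mapsto\rho(T)$ is continuous on the space of processes equipped with $d$.

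Next, suppose $\rho_k(T)>\epsilon$ for infinitely many indices, say $k_1<k_2<\cdots$. Restricting to this subsequence, I still have $\rho_{k_l}(T)\to\rho_*(T)$ as $l\to\infty$, because the full sequence converges. Every term $\rho_{k_l}(T)$ exceeds $\epsilon$, and the limit of a convergent sequence of reals each of which is $>\epsilon$ is $\ge\epsilon$; hence $\rho_*(T)\ge\epsilon$, which is the assertion.

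I do not anticipate any real obstacle here. The only point worth flagging is that strict inequalities are not preserved in the limit, which is exactly why the conclusion reads ``$\ge\epsilon$'' rather than ``$>\epsilon$''; and the positivity of the particular weight $w_j$ is what licenses the coordinatewise-convergence step.
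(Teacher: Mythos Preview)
Your argument is correct and is exactly the approach the paper takes: it simply remarks that the map $\rho\mapsto\rho(T)$ is continuous in the topology of $d$, and the conclusion then follows from passing to the limit along the given subsequence. Your write-up just makes explicit the one-term estimate $w_j|\rho_k(T)-\rho_*(T)|\le d(\rho_k,\rho_*)$ behind that continuity and the standard fact that strict inequalities become non-strict in the limit.
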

 This statement follows from the fact that  $\rho(T)$ is continuous as a function of $\rho$. 

\smallskip
{\em Proof of Theorem~\ref{th:uni}.}
To prove the first statement of the theorem,
we will show that the test $\phi_{H_0,H_1}$ is a uniformly consistent test for $\cl H_0\cap\mathcal E$ against $\cl H_1\cap\mathcal E$ (and hence for $H_0$ against $H_1$),
under  the conditions of the theorem. 
Suppose that, on the contrary,  for some $\alpha>0$ for every $n'\in\N$ there is a process $\rho\in \cl H_0$ such that 
$\rho(\phi(X_{1..n})=1)>\alpha$ for some $n>n'$. 
Define 
$$
\Delta:=d(\cl H_0,\cl H_1):=\inf_{\rho_0\in \cl H_0\cap\mathcal E, \rho_1\in \cl H_1\cap\mathcal E} d(\rho_0,\rho_1),
$$ which is positive since $\cl H_0$ and $\cl H_1$ are closed and disjoint.
We have
\begin{multline}\label{eq:union}
\alpha<\rho(\phi(X_{1..n})=1)\\ \le   \rho(\hat d(X_{1..n},H_0)\ge\Delta/2\ or \ \hat d(X_{1..n},H_1)<\Delta/2)\\ \le \rho(\hat d(X_{1..n},H_0)\ge\Delta/2) + \rho(\hat d(X_{1..n},H_1)<\Delta/2).
\end{multline} 
This implies that either $\rho(\hat d(X_{1..n},\cl H_0)\ge\Delta/2)>\alpha/2$ or $\rho(\hat d(X_{1..n},\cl H_1)<\Delta/2)>\alpha/2$,
so that, by assumption, at least one of these inequalities holds for infinitely many $n\in\N$ for some sequence  $\rho_n\in H_0$.
Suppose that it is the first one, that is, there is an increasing sequence $n_i$, $i\in\N$ and a sequence $\rho_i\in\cl H_0$, $i\in\N$ 
such that 
\begin{equation}\label{eq:da}
\rho_i(\hat d(X_{1..n_i},\cl H_0)\ge\Delta/2)>\alpha/2 \text{ for all }i\in\N. 
\end{equation}
The set $\S$ is compact, hence so is its closed subset $\cl H_0$. Therefore, the sequence $\rho_i$, $i\in\N$ must
contain a subsequence that converges to a certain process $\rho_*\in\cl H_0$. Passing to a subsequence if necessary, we may assume
that this convergent subsequence is the sequence $\rho_i$, $i\in\N$ itself.

Using Lemma~\ref{th:count}, (\ref{eq:count1}) (with $\rho=\rho_{n_m}$, $m=n_m$, $k=n_k$, and $H=\cl H_0$),  and taking  $k$  large enough to have $t_{n_k}<\Delta/4$,
 for every $m$  large enough to have $\frac{2n_k}{n_m-n_k+1}<\Delta/4$, we obtain 
\begin{equation}\label{eq:down}
8\Delta^{-1}\rho_{n_m}\left(\hat d(X_{1..n_k},\cl H_0)  \ge\Delta/4\right)  \ge  \rho_{n_m}\left(\hat d(X_{1..n_m},\cl H_0)\ge\Delta/2\right) > \alpha/2. 
\end{equation}
That is, we have shown that for any large enough index $n_k$ the inequality 
$\rho_{n_m}(\hat d(X_{1..n_k},\cl H_0)\ge\Delta/4)> \Delta\alpha/16$ holds for 
infinitely many indices $n_m$. 
From this and Lemma~\ref{th:cont} with  $T=T_k:=\{X:\hat d(X_{1..n_k},\cl H_0)\ge\Delta/4\}$  we conclude 
that $\rho_*(T_k)>\Delta\alpha/16$.
The latter holds for infinitely many $k$; that is,  $\rho_*(\hat d(X_{1..n_k},\cl H_0)\ge\Delta/4)>\Delta\alpha/16$ infinitely often.
Therefore, 
$$
\rho_*(\limsup_{n\rightarrow\infty} d(X_{1..n},\cl H_0)\ge\Delta/4)>0.
$$ However, we must have 
$$
\rho_*(\lim_{n\rightarrow\infty} d(X_{1..n},\cl H_0)=0)=1
$$
for every $\rho_*\in\cl H_0$: indeed, for $\rho_*\in\cl H_0\cap\mathcal E$ it follows from Lemma~\ref{th:dd}, and for $\rho_*\in\cl H_0\backslash\mathcal E$ 
from  Lemma~\ref{th:dd}, ergodic decomposition and the conditions of the theorem. 

Thus, we have arrived at a contradiction that shows that $\rho_n(\hat d(X_{1..n},\cl H_0)>\Delta/2)>\alpha/2$ cannot hold for 
infinitely many $n\in\N$ for any sequence of  $\rho_n\in\cl H_0$. Analogously, we can show that $\rho_n(\hat d(X_{1..n},\cl H_1)<\Delta/2)>\alpha/2$
cannot hold for infinitely many $n\in\N$ for any sequence of $\rho_n\in\cl H_0$. Indeed,  using Lemma~\ref{th:count}, equation~(\ref{eq:count2}), we can show that
  $\rho_{n_m}(\hat d(X_{1..n_m},\cl H_1)\le\Delta/2) > \alpha/2$ for a large enough $n_m$
implies $\rho_{n_m}(\hat d(X_{1..n_k},\cl H_1)\le 3\Delta/4)> \alpha/4$ for a smaller $n_k$.
Therefore, if we assume that $\rho_n(\hat d(X_{1..n},\cl H_1)<\Delta/2)>\alpha/4$ for infinitely many $n\in\N$ for some sequence of $\rho_n\in\cl H_0$, then 
we will also find a $\rho_*$ for which $\rho_*(\hat d(X_{1..n},\cl H_1)\le 3\Delta/4)> \alpha/4$ for infinitely
many $n$, which, using Lemma~\ref{th:dd} and ergodic decomposition, can be shown to contradict the fact that $\rho_*(\lim_{n\rightarrow\infty}  d(X_{1..n},\cl H_1)\ge\Delta)=1$.
 
Thus, returning to~(\ref{eq:union}), we have shown that from some $n$ on there is no $\rho\in \cl H_0$ for which $\rho(\phi=1)>\alpha$
holds true. The statement for $\rho\in\cl H_1$ can be proven analogously, thereby finishing the proof of the first statement.

To prove the second statement of the theorem, 
we assume that there exists a uniformly consistent test $\phi$ for $H_0$ against $H_1$, and we will show that
$W_\rho(H_{1-i})=0$ for every $\rho\in \cl H_i$.
Indeed, let $\rho\in \cl H_0$, that is, suppose that there is a sequence $\xi_i\in H_0, i\in\N$ such that $\xi_i\to\rho$. 
Assume  $W_\rho(H_1)=\delta>0$ and take $\alpha:=\delta/2$.
Since the test $\phi$ is uniformly consistent, there is an $N\in\N$ such  that  for every $n>N$ we have  
\begin{multline*}
 \rho(\phi(X_{1..n}=0))\le \int_{H_1} \phi(X_{1..n}=0)dW_\rho + \int_{\mathcal E\backslash H_1} \phi(X_{1..n}=0) dW_\rho \\ \le \delta\alpha + 1-\delta \le 1-\delta/2.
\end{multline*}
Recall that, for $T\in A^*$,  $\mu(T)$ is a continuous function in $\mu$. In particular, this holds for the set $T=\{X\in A^n: \phi(X)=0\}$, 
for any given $n\in\N$. Therefore, for every  $n>N$ and for every  $i$ large enough,  $\rho_i(\phi(X_{1..n})=0)<1-\delta/2$ implies also $\xi_i(\phi(X_{1..n})=0)<1-\delta/2$
which contradicts $\xi_i\in H_0$.
This contradiction shows $W_\rho(H_1)=0$ for every $\rho\in \cl H_0$. The case $\rho\in \cl H_1$ is analogous.
The theorem is proven.

\smallskip
{\em Proof of Proposition~\ref{th:no}.}\ 
Assume $d(\rho,\nu)>\epsilon$ (the other case is obvious).
Consider the process $(x_1,y_1),(x_2,y_2),\dots$ on pairs $(x_i,y_i)\in A^2$, such that
the distribution of $x_1,x_2,\dots$ is $\nu$, the distribution of $y_1,y_2,\dots$ is $\rho$ and 
the two components $x_i$ and $y_i$ are independent; in other words, the distribution of $(x_i,y_i)$ is $\nu\times\rho$. 
Consider also a two-state stationary ergodic  Markov chain $\mu$, with two states $1$ and $2$, whose transition
probabilities are $\left(\begin{array}{cc}
                          1-p & p \\ q & 1-q
                         \end{array} \right)$, where $0<p<q<1$. 
The limiting (and initial) probability of the state $1$ is $p/(p+q)$ and that of the state $2$ is $q/(p+q)$. 
Finally, the process $z_1,z_2,\dots$ is constructed as follows:
$z_i=x_i$ if $\mu$ is in the state $a$ and $z_i=y_i$ otherwise (here it is assumed that the chain $\mu$ generates
a sequence of outcomes independently of $(x_i,y_i)$. Clearly, for every $p,q$ satisfying $0<p<q<1$ the process $z_1,z_2,\dots$ is stationary ergodic; denote $\zeta$ its distribution.
Let $p_n:=1/(n+1)$, $n\in\N$.  Since $d(\rho,\nu)>\epsilon$, we can find a $\delta>0$  such 
that   $d(\rho,\zeta_n)>\epsilon$   
where $\zeta_n$ is the distribution $\zeta$ with parameters $p_n$ and $q_n$, where $q_n$ satisfies $q_n/(p_n+q_n)=\delta$. 
Thus, $\zeta_n\in H_1$ for all $n\in\N$. However, $\lim_{n\to\infty}\zeta_n=\zeta_\infty$
where $\zeta_\infty$ is the stationary distribution with $W_{\zeta_\infty}(\rho)=\delta$ and $W_{\zeta_\infty}(\nu)=1-\delta$.
Therefore, $\zeta_\infty\in\cl H_1$ and $W_{\zeta_\infty}(H_0)>0$, so that by Theorem~\ref{th:uni} there is no uniformly consistent
test for $H_0$ against $H_1$, which concludes the proof.

\subsection*{Acknowledgements}  This work has been partially supported by
    the French Ministry of Higher Education and Research, Nord-Pas de Calais Regional Council and FEDER through the 
``Contrat de Projets Etat Region (CPER) 2007-2013.'' 


\end{document}